\def\BibTeX{{\rm B\kern-.05em{\sc i\kern-.025em b}\kern-.08em
    T\kern-.1667em\lower.7ex\hbox{E}\kern-.125emX}}
\newtheorem{theorem}{Theorem}
\newtheorem{lemma}{Lemma}
\numberwithin{equation}{section}
\numberwithin{theorem}{section}
\numberwithin{lemma}{section}
\numberwithin{corollary}{section}
\numberwithin{definition}{section}
\numberwithin{example}{section}
\numberwithin{remark}{section}
\numberwithin{property}{section}
\numberwithin{proposition}{section}
\begin{document}

\title{On Cyclic Finite-State Approximation of Data-Driven Systems\\
{\footnotesize 
}
}

\author{\IEEEauthorblockN{Fredy Vides}
\IEEEauthorblockA{\textit{Scientific Computing Innovation Center} \\
\textit{Department of Applied Mathematics}\\
\textit{Universidad Nacional Aut\'onoma de Honduras, UNAH}\\
Honduras \\
fredy.vides@unah.edu.hn}\\
\IEEEauthorblockA{\textit{Graduate Program in Structural Engineering} \\
\textit{Universidad Tecnol\'ogica Centroamericana, UNITEC}\\
Honduras \\
vides@unitec.edu}
}

\maketitle

\begin{abstract}
In this document, some novel theoretical and computational techniques for constrained approximation of data-driven systems, are presented. The motivation for the development of these techniques came from structure-preserving matrix approximation problems that appear in the fields of system identification and model predictive control, for data-driven systems and processes. The research reported in this document is focused on finite-state approximation of data-driven systems. 

Some numerical implementations of the aforementioned techniques in the simulation and model predictive control of some generic data-driven systems, that are related to electrical signal transmission models, are outlined.
\end{abstract}

\begin{IEEEkeywords}
Closed-loop control system, state transition matrix, singular value decomposition, structured matrices, pseudospectrum.
\end{IEEEkeywords}

\maketitle

\section{Introduction}

The purpose of this document is to present some novel theoretical and computational techniques for constrained approximation of data-driven systems. These systems can be interpreted as discrete-time systems that can be {\em partially} described by difference equations of the form.
\begin{equation}
\Sigma: \left\{
\begin{array}{l}
x_{t+1}=F(x_t,t),~~ t\geq 1\\
x_1\in \Sigma\subseteq \mathbb{C}^n
\end{array}
\right.
\label{data_driven_sys_def}
\end{equation}
where $\Sigma\subseteq\mathbb{C}^n$ is the set of {\em valid} states for the system, and where $F:\mathbb{C}^n\times \mathbb{R}\to \mathbb{C}^n$ is some constrained map that is either partially known, or needs to be determined/discovered based on some (sampled) data $\{x_t\}_{1\leq t\leq N}$, obtained in the form of data {\em snapshots} related to the system $\Sigma$ under study. One can also interpret the map $F$ in \eqref{data_driven_sys_def} as a {\em black-box device}, that needs to be determined in such a way that it can be used to transform the {\em present state} $x_t$ into the {\em next state} $x_{t+1}$, according to \eqref{data_driven_sys_def}.

Since in this study, the information about a given system is provided essentially by orbits ({\em data sequences}) in some valid state space $\Sigma$, from here on, we will refer to data-driven systems in terms of sets or elements in a state space $\Sigma$.

The discovery, simulation and predictive control of the evolution laws for systems of the form \eqref{data_driven_sys_def} are highly important in data-based analytics and forcasting, for models related to the automatic control of systems and processes in industry and engineering. 

Although, on this paper we will focus on the solution of the theoretical problems related to the existence and computability of finite-state approximation of data-driven systems determined by data sequences described by \eqref{data_driven_sys_def}, the constructive nature of the procedures presented in this document allows one to derive prototypical algorithms like the one presented in \S\ref{section_algorithms}. Some numerical implementations of this prototypical algorithm are presented in \S\ref{section_experiments}.

Given an orbit $\{x_t\}_{t\geq 1}$ of a data-driven system $\Sigma$ determined by \eqref{data_driven_sys_def}, we will approach the computation of finite-state approximations of the state transition matrices $\{\mathcal{T}_{t,s}\}_{t,s\geq 1}$ that satisfy the equations $\mathcal{T}_{t,s}x_t=x_{t+s}$, by computing a closed-loop control system $\hat{\Sigma}$ that is determined by the decomposition
\begin{equation}
\hat{\Sigma}:\left\{
\begin{array}{l}
\hat{x}_{1}=\mathcal{L}x_1\\
\hat{x}_{t+1}=\hat{\mathcal{T}}_t\hat{x}_t\\
x_{t}=\mathcal{K}\hat{x}_{t}
\end{array}
\right.,t\geq 1
\label{CLS_Rep}
\end{equation}
related to some available sampled data $\{\tilde{x}_t\}_{1\leq t\leq N}\subseteq \Sigma$, with $\hat{\Sigma}\subseteq \{\tilde{x}_t\}_{1\leq t\leq N}$ and where the matrices $\mathcal{K},\mathcal{L}$, $\{\hat{\mathcal{T}}_t\}_{t\geq 1}$ need to be determined based on the sampled data. 

Once a system like \eqref{CLS_Rep} has been computed, one can ({\em approximately}) describe or predict the behavior of $\Sigma$ using the state transition matrices $\hat{\mathcal{T}}_{t}$. The theoretical and computational aspects of the corresponding procedures are studied in \S\ref{section_cyclic_FSA} and \S\ref{section_algorithms}.

\section{Cyclic Finite-State Approximation}

\label{section_cyclic_FSA}

\subsection{Notation}
We will write $\mathcal{S}^1$ to denote the unit circle in $\mathbb{C}$ that is determined by the expression $\{z\in \mathbb{C}~|~|z|=1\}$.

We will write $\mathbb{Z}^+$ to denote the set of positive integers, and $\mathbf{1}_n$ and $\mathbf{0}_{n}$ to denote the identity and zero matrices in $\mathbb{C}^{n\times n}$, respectively. From here on, given a matrix $X\in \mathbb{C}^{m\times n}$, we will write $X^\ast$ to denote the conjugate transpose of $X$ determined by $X^\ast=\overline{X^\top}=[\overline{X}_{ji}]$ in $\mathbb{C}^{n\times m}$. We will represent vectors in $\mathbb{C}^n$ as column matrices in $\mathbb{C}^{n\times 1}$.

Given two positive integers $p,q$ such that $p\geq q$, we will write $p~\mathrm{mod}~q$ to denote the smallest integer $0\leq r<q$ such that $p=mq+r$, for some integer $m$.

Given a matrix $Z\in \mathbb{C}^{n\times n}$, and a polynomial $p\in \mathbb{C}[z]$ over the complex numbers determined by the expression $p(z)=a_0+a_1z+\cdots+a_mz^m$, we will write $p(Z)$ to denote the matrix in $\mathbb{C}^{n\times n}$ defined by the expression $p(Z)=a_0\mathbf{1}_n+a_1Z+\cdots+a_mZ^m$, and we will write $\mathbb{C}[Z]$ to denote the matrix set $\{q(Z)|q\in \mathbb{C}[z]\}$.

We will write $\|\cdot\|_2$ to denote the Euclidean norm in $\mathbb{C}^n$ determined by $\|x\|_2=\sqrt{x^\ast x}=(\sum_{j=1}^n |x_j|^2)^{1/2}$, $x\in \mathbb{C}^n$. Given $\varepsilon>0$ and $A\in\mathbb{C}^{n\times n}$, we will write $\sigma_\varepsilon(A)$ to denote the $\varepsilon$-pseudospectrum of $A$, that by \cite[Theorem 2.1]{bookPspectra} is equivalent to the set of $z\in \mathbb{C}$ such that
\begin{equation}
\|(z\mathbf{1}_n-A)v\|_2<\varepsilon
\label{pspectra_def}
\end{equation}
for some $v\in\mathbb{C}^n$ with $\|v\|_2=1$. 

In this document we will write $\hat{e}_{j,n}$ to denote the matrices in $\mathbb{C}^{n\times 1}$ representing the canonical basis of $\mathbb{C}^{n}$ (the $j$-column of the $n\times n$ identity matrix), that are determined by the expression
\begin{equation}
\hat{e}_{j,n}=
\begin{bmatrix}
\delta_{1,j} & \delta_{2,j} & \cdots & \delta_{n-1,j}  & \delta_{n,j}
\end{bmatrix}^\top
\label{ej_def}
\end{equation}
for each $1\leq j\leq n$, where $\delta_{k,j}$ is the Kronecker delta determined by the expression.
\begin{equation}
\delta_{k,j}=
\left\{
\begin{array}{l}
1, \:\: k=j\\
0, \:\: k\neq j
\end{array}
\right.
\label{delta_def}
\end{equation}

\subsection{Generic Cyclic Shift Matrices}

Let us consider the matrix $C_{k,n}\in \mathbb{C}^{n\times n}$ determined by the expression.
\begin{equation}
C_{k,n}=
\begin{bmatrix}
0 & 0 & 0 & \cdots & \delta_{k,1}\\
1 & 0 & 0 & \cdots & \delta_{k,2}\\
0 & 1 & 0 & \cdots & \delta_{k,3}\\
\vdots & \ddots & \ddots & \ddots & \vdots\\
0 & \cdots & 0 & 1 & \delta_{k,n}
\end{bmatrix}
\label{c_shift_first_rep}
\end{equation}
We call $C_{k,n}\in \mathbb{C}^{n\times n}$ a {\bf \em Generic Cyclic Shift} matrix or GCS in this document. It can be seen that a matrix $C_{k,n}\in \mathbb{C}^{n\times n}$ determined by \eqref{c_shift_first_rep} can be represented in the form.
\begin{equation}
C_{k,n}=\hat{e}_{k,n}\hat{e}_{n,n}^\ast+\sum_{j=1}^{n-1}\hat{e}_{j+1,n}\hat{e}_{j,n}^\ast
\label{c_shift_first_sum_rep}
\end{equation}

\begin{lemma}
\label{shift_property}
The GCS matrix $C_{k,n}\in \mathbb{C}^{n\times n}$ satisfies the following conditions.
\begin{equation}
C_{k,n}\hat{e}_{j,n}=
\left\{
\begin{array}{l}
\hat{e}_{j+1,n}, \:\: 1\leq j\leq n-1\\
\hat{e}_{k,n}, \:\: j=n
\end{array}
\right.
\label{Sn_consntraints}
\end{equation}
\end{lemma}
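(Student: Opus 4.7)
The plan is to apply the sum representation \eqref{c_shift_first_sum_rep} directly to the basis vectors $\hat{e}_{j,n}$ and exploit the orthonormality relation $\hat{e}_{i,n}^\ast \hat{e}_{j,n}=\delta_{i,j}$, which follows immediately from the componentwise definition \eqref{ej_def} together with \eqref{delta_def}. Since the two cases $1\le j\le n-1$ and $j=n$ are disjoint, I would treat them separately, and in each case the proof reduces to evaluating inner products of canonical basis vectors.

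For the case $1\le j\le n-1$, I would substitute $\hat{e}_{j,n}$ into the representation
\[
C_{k,n}\hat{e}_{j,n}=\hat{e}_{k,n}(\hat{e}_{n,n}^\ast \hat{e}_{j,n})+\sum_{i=1}^{n-1}\hat{e}_{i+1,n}(\hat{e}_{i,n}^\ast \hat{e}_{j,n}).
\]
The first term vanishes because $\hat{e}_{n,n}^\ast \hat{e}_{j,n}=\delta_{n,j}=0$ when $j<n$, while the sum collapses to the single surviving term $i=j$, giving $\hat{e}_{j+1,n}$, as required.

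For the case $j=n$, substituting $\hat{e}_{n,n}$ produces $\hat{e}_{n,n}^\ast \hat{e}_{n,n}=1$ in the first term and $\hat{e}_{i,n}^\ast \hat{e}_{n,n}=\delta_{i,n}=0$ for all $1\le i\le n-1$ in the sum, so only the $\hat{e}_{k,n}$ term survives.

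There is essentially no obstacle here; the only subtlety is ensuring that the index set in the sum in \eqref{c_shift_first_sum_rep} never reaches $i=n$, which is precisely why the two cases split cleanly at $j=n-1$ and $j=n$. Once the orthonormality of the canonical basis is invoked, both identities follow by inspection.
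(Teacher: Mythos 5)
Your proposal is correct and follows essentially the same route as the paper's own proof: both apply the rank-one sum representation \eqref{c_shift_first_sum_rep} to $\hat{e}_{j,n}$ and use the orthonormality $\hat{e}_{i,n}^\ast\hat{e}_{j,n}=\delta_{i,j}$ to collapse the sum. Your explicit case split at $j=n$ versus $1\le j\le n-1$ is just a slightly more verbose presentation of the single displayed computation in the paper.
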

\begin{proof}
By \eqref{delta_def} and \eqref{c_shift_first_sum_rep} we will have that.
\begin{eqnarray*}
C_{k,n}\hat{e}_{j,n}&=&\hat{e}_{k,n}(\hat{e}^\ast_{n,n}\hat{e}_{j,n})+\sum_{s=1}^{n-1}\hat{e}_{s+1,n}(\hat{e}_{s,n}^\ast\hat{e}_{j,n} )\\
&=&\delta_{n,j}\hat{e}_{k,n}+\sum_{s=1}^{n-1}\delta_{s,j}\hat{e}_{s+1,n}\\
&=&\left\{
\begin{array}{l}
\hat{e}_{j+1,n}, \:\: 1\leq j\leq n-1\\
\hat{e}_{k,n}, \:\: j=n
\end{array}
\right.
\end{eqnarray*}
This completes the proof.
\end{proof}

It can be seen that the GCS matrix $C_{k,n}$ determined by \eqref{c_shift_first_rep} can be expressed in the form.
\begin{equation}
C_{k,n}=
\begin{bmatrix}
\hat{e}_{2,n} & \hat{e}_{3,n} & \hat{e}_{4,n} & \cdots & \hat{e}_{k,n}
\end{bmatrix}
\label{c_shift_second_rep}
\end{equation}

By \eqref{c_shift_second_rep} and elementary linear algebra we will have that $C_{k,n}$ is the companion matrix of the polynomial $p_k\in \mathbb{C}[z]$ determined by the expression. 
\begin{equation}
p_k(z)=z^n-z^{k-1}
\label{Ck_min_poly}
\end{equation}
This means that each GCS $C_{k,n}\in\mathbb{C}^{n\times n}$ satisfies the equation.
\begin{equation}
p_k(C_{k,n})=C_{k,n}^n-C_{k,n}^{k-1}=\mathbf{0}_{n}
\label{Ck_min_poly_cond}
\end{equation}

\begin{theorem}
\label{gen_c_shift_properties}
Given a GCS matrix $C_{k,n}\in\mathbb{C}^{n\times n}$, the function $\tau_k:\mathbb{Z}^+\to \{1,\ldots,n-1\}$ determined by 
\begin{equation}
\tau_k(t)=
\left\{
\begin{array}{l}
t, ~~ 1\leq t< k-1\\
k-1+(t-k+1)\mathrm{mod}~T, ~~ t\geq k-1
\end{array}
\right.
\label{tau_def}
\end{equation}
with $T=n-k+1$, satisfies the constraints $C_{k,n}^t\hat{e}_{1,n}=\hat{e}_{1+\tau_k(t),n}$, for $t\in\mathbb{Z}^+$.
\end{theorem}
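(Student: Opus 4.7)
The plan is to analyze the orbit $\{C_{k,n}^{t}\hat{e}_{1,n}\}_{t\geq 0}$ directly using Lemma \ref{shift_property}, splitting the argument according to the two branches of $\tau_{k}$. The underlying picture is that the orbit first traverses $\hat{e}_{1,n},\hat{e}_{2,n},\ldots,\hat{e}_{n,n}$ in order (a purely pre-periodic phase) and then enters a cycle on $\{\hat{e}_{k,n},\hat{e}_{k+1,n},\ldots,\hat{e}_{n,n}\}$ of length $T=n-k+1$. The definition of $\tau_{k}$ simply stitches these two phases together, so the theorem reduces to verifying each phase separately.

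First, I would prove by a straightforward induction on $t$, using only the first branch of Lemma \ref{shift_property}, that $C_{k,n}^{t}\hat{e}_{1,n}=\hat{e}_{1+t,n}$ for every $0\leq t\leq n-1$. Since $1+t\leq n$ throughout this range, the inductive step never invokes the wrap-around rule. This immediately settles the theorem for $1\leq t< k-1$, and it also yields the crucial boundary value $C_{k,n}^{k-1}\hat{e}_{1,n}=\hat{e}_{k,n}$, which is the entry point of the cycle.

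Next, for $t\geq k-1$ I would factor $t=(k-1)+s$ with $s\geq 0$ and observe that
\[
C_{k,n}^{t}\hat{e}_{1,n}=C_{k,n}^{s}\bigl(C_{k,n}^{k-1}\hat{e}_{1,n}\bigr)=C_{k,n}^{s}\hat{e}_{k,n}.
\]
So it suffices to show $C_{k,n}^{s}\hat{e}_{k,n}=\hat{e}_{k+(s\,\mathrm{mod}\,T),n}$, which after reindexing is exactly $\hat{e}_{1+\tau_{k}(t),n}$. I would prove this by induction on $s$: for $0\leq s\leq T-1$ repeated use of the first branch of Lemma \ref{shift_property} gives $C_{k,n}^{s}\hat{e}_{k,n}=\hat{e}_{k+s,n}$, since $k+s\leq n$; and for $s=T$ the last step of this induction hands back $\hat{e}_{n,n}$, and one application of the second branch of Lemma \ref{shift_property} yields $C_{k,n}^{T}\hat{e}_{k,n}=C_{k,n}\hat{e}_{n,n}=\hat{e}_{k,n}$. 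Periodicity with period $T$ and the division algorithm then extend the identity to every $s\geq 0$.

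The main care point is matching the two pieces of $\tau_{k}$ at the boundary $t=k-1$: both branches give $\tau_{k}(k-1)=k-1$, consistent with $C_{k,n}^{k-1}\hat{e}_{1,n}=\hat{e}_{k,n}$, and the degenerate case $T=1$ (i.e.\ $k=n$) is handled automatically because the cycle is the singleton $\{\hat{e}_{n,n}\}$ and the residue $(t-k+1)\,\mathrm{mod}\,T$ is identically zero. Beyond this bookkeeping, the argument is essentially a translation of the two-phase orbit description into the single closed-form formula for $\tau_{k}$.
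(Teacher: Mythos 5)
Your proof is correct, but it takes a genuinely different route from the paper's. The paper first upgrades the reduction of exponents to a \emph{matrix} identity: writing any $t\geq k-1$ as $t=k-1+r+m(n-k+1)$ and invoking $C_{k,n}^{n}=C_{k,n}^{k-1}$ (the companion-matrix relation \eqref{Ck_min_poly_cond}), it obtains $C_{k,n}^{t}=C_{k,n}^{\tau_k(t)}$ as an identity in $\mathbb{C}^{n\times n}$, and only then applies \cref{shift_property} to the reduced exponent $\tau_k(t)\leq n-1$. You instead never leave the orbit of $\hat{e}_{1,n}$: you track the vector sequence directly with \cref{shift_property}, establishing the pre-periodic phase by induction and the period-$T$ cycle on $\{\hat{e}_{k,n},\ldots,\hat{e}_{n,n}\}$ via the single wrap-around step $C_{k,n}\hat{e}_{n,n}=\hat{e}_{k,n}$. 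Your argument is more self-contained, since it uses nothing beyond \cref{shift_property}, whereas the paper's rests on the assertion that $C_{k,n}$ is the companion matrix of $z^{n}-z^{k-1}$, which is stated without detailed proof; your version also makes explicit the boundary matching at $t=k-1$ and the degenerate case $T=1$, which the paper elides. What the paper's route buys in exchange is the strictly stronger matrix-level identity $C_{k,n}^{t}=C_{k,n}^{\tau_k(t)}$, valid on all of $\mathbb{C}^{n}$ and not just on the orbit of $\hat{e}_{1,n}$; that identity, recorded as \eqref{c_shift_per_constraint_1}, is cited again in the proof of \cref{c_control_main_result}, so if one adopted your orbit-only argument one would need to re-derive it there separately. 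Both proofs are sound and elementary; the choice is between a purely combinatorial orbit computation and an algebraic reduction that is reused downstream.
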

\begin{proof}
Given two positive integers $k,n$ such that $k\leq n$, and any integer $s\geq k-1$ one can represent $s$ in the form
\begin{equation}
s=k-1+r+m(n-k+1),
\label{integer_decomposition}
\end{equation}
for some integers $0\leq r\leq n-k$ and $m\geq 0$. By \eqref{Ck_min_poly} and \eqref{integer_decomposition} it can be seen that for any $t\geq k-1$ there are integers $r,m\geq 0$ such that $r\leq n-k$ and
\begin{eqnarray}
C^t_{k,n}&=&C_{k,n}^{k-1+r+m(n-k+1)}\nonumber\\
&=&C_{k,n}^r(C_{k,n}^{(n-k+1)})^mC_{k,n}^{k-1}=C_{k,n}^rC_{k,n}^{k-1}.\nonumber\\
\label{c_shift_per_constraint_1}
\end{eqnarray}
By \eqref{c_shift_per_constraint_1} we will have that for any $t\in\mathbb{Z}^+$, $C_{k,n}^{t}=C_{k,n}^{\tau_k(t)}$, where $\tau_k(t)$ is defined \eqref{tau_def}. By \cref{shift_property}, \eqref{Ck_min_poly}, \eqref{tau_def} and \eqref{c_shift_per_constraint_1} we will have that for $t\in \mathbb{Z}^+$, $C_{k,n}^t\hat{e}_{1,n}=C_{k,n}^{\tau_k(t)}\hat{e}_{1,n}=\hat{e}_{1+\tau_k(t),n}$. This completes the proof.
\end{proof}

\subsection{Data-Driven Matrix Control Laws}

Given a data-driven system $\Sigma$ in $\mathbb{C}^n$ together with an orbit determined by the sequence $\{x_t\}_{t\geq 1}\subseteq \Sigma\subseteq \mathbb{C}^n$, and given some integer $1\leq S\ll n$, by a {\em \bf data-driven control law} based on a sample $\{\tilde{x}_t\}_{1\leq t\leq S}\subseteq \{x_t\}_{t\geq 1}$, we will mean a matrix set $\{F_t\}_{1\leq t\leq S-1}$ in $\mathbb{C}^{n\times n}$ such that $F_t\tilde{x}_1=\tilde{x}_{t+1}$ and $\mathrm{rank}(F_t)\leq S$ for each $1\leq t\leq S-1$.

We will say that an orbit $\{x_t\}_{t\geq 1}\subseteq \Sigma\subseteq \mathbb{C}^{n}$ of a data-driven system $\Sigma$ is {\bf \em approximately eventually periodic} (AEP), if for any $\varepsilon>0$ there are two integers $T'\geq 1$, $s'\geq 0$, and a vector sequence $\{\tilde{x}_t\}_{t\geq 1}\subset \mathbb{C}^{n}$ of non-zero vectors such that.
\begin{equation}
\left\{
\begin{array}{l}
\|\tilde{x}_t-x_t\|_2\leq \varepsilon\\
\tilde{x}_{t+s'+T'}=\tilde{x}_{t+s'}
\end{array}
\right., t\in\mathbb{Z}^+
\label{index_def}
\end{equation}
Let us consider the smallest integers $0\leq s\leq s'$ and $1\leq T\leq T'$, for which the relations \eqref{index_def} still hold, the number $s+T+1$ will be called the $\varepsilon$-index of the orbit $\{x_t\}_{t\geq 1}$, and will be denoted by $\mathrm{ind}_\varepsilon(\{x_t\})$.

Given an orbit $\{x_t\}_{t\geq 1}$ of a data-driven system $\Sigma$. We say that the control law $\{F_{t}\}_{1\leq t\leq S-1}$ based on some sample $\{\tilde{x}_{t}\}_{1\leq t\leq S}$ is {\em \bf meaningful}, if it (approximately) mimics the dynamical behavior of $\{x_t\}_{t\geq 1}$ {\em nearby} $\tilde{x}_1\in\Sigma$, in particular, if each matrix $F_{s}$ resembles the spectral (or pseudospectral) behavior of the connecting matrix $\mathbb{K}$ (in the sense of \cite[\S2]{DMD_Schmid} and \cite[\S2.2]{DMD_Kutz}) of the data-driven system $\Sigma$ under study, that {\em approximately} satisfies the equations $\mathbb{K}\tilde{x}_t=\tilde{x}_{t+1}$, $1\leq t\leq S-1$.

The matrix control laws for the orbit's data $\{x_t\}_{t\geq 1}\subset \mathbb{C}^n$ of a data-driven system $\Sigma$ are also related to the connecting operator $\mathbb{K}$, for some given orbit's sampled data $\{\tilde{x}_t\}_{1\leq t\leq S}$ with $S\ll n$, by the constrained matrix equations.
\begin{equation}
\left\{
\begin{array}{l}
F_t\tilde{x}_1=\mathbb{K}\tilde{x}_{1+t}\\
\mathrm{rank}(F_t)\leq S
\end{array}
\right., 1\leq t\leq S-1
\label{connecting_matrix_equations}
\end{equation}

Given an orbit $\{x_t\}_{t\geq 1}\subset \mathbb{C}^n$ of a data-driven system $\Sigma$, we say that $\{x_t\}_{t\geq 1}$ is {\bf \em cyclically controlled}, if there is a meaningful control law $\{F_t\}_{1\leq t\leq S-1}\subset\mathbb{C}^{n\times n}$ based on some sample $\{\tilde{x}_t\}_{1\leq t\leq S}\subseteq \{x_t\}_{t\geq 1}$ such that for each $t\geq 1$, there is $1\leq \tau(t)\leq S$ such that $x_{t+1}=F_{\tau(t)}x_t$, and if in addition, there are $\alpha>0$ and matrices $\mathcal{K},\mathcal{L}\in\mathbb{C}^{n}$, $Z_m\in\mathbb{C}^{n\times m}$ such that
\begin{equation}
\left\{
\begin{array}{l}
F_{\tau(t)}=\alpha\mathcal{K}Z_m C_{k,m}^{\tau(t)} Z_m^\ast \mathcal{L}, ~~ t\in\mathbb{Z}^+\\
\mathcal{K}^2=\mathcal{K}^\ast=\mathcal{K}\\
\mathcal{L}^2=\mathcal{L}^\ast=\mathcal{L}
\end{array}
\right.
\label{cyclic_matrix_decomposition_1}
\end{equation}
where $C_{k,m}$ is the GCS matrix determined by \eqref{c_shift_first_rep}. One can notice that \eqref{cyclic_matrix_decomposition_1} produces a representation for the evolution of $\{x_t\}_{t\geq 1}$ in terms of the following closed-loop discrete-time system.
\begin{equation}
\tilde{\Sigma}:\left\{
\begin{array}{l}
\hat{x}_1=\alpha\mathcal{L}x_1\\
\hat{x}_{t+1}=Z_m C_{k,m}^{\tau(t)} Z_m^\ast \hat{x}_t, ~~ t\in\mathbb{Z}^+\\
x_{t}=\mathcal{K}\hat{x}_t
\end{array}
\right.
\label{cyclic_matrix_decomposition_2}
\end{equation}

We call the system $\tilde{\Sigma}$ described by \eqref{cyclic_matrix_decomposition_2} a cyclic finite state approximation (CFSA) of the data-driven system $\Sigma$ based on the sample $\{\tilde{x}_t\}_{1\leq t\leq S}\subseteq\Sigma$, and the GCS $C_{k,m}$ in \eqref{cyclic_matrix_decomposition_2} will be called the GCS {\bf \em factor} of the CFSA $\tilde{\Sigma}$ based on $\{\tilde{x}_t\}_{1\leq t\leq S}$. 

Because of \eqref{cyclic_matrix_decomposition_1} and \eqref{cyclic_matrix_decomposition_2}, one can interpret the computation of a CFSA $\tilde{\Sigma}$ for a given data-driven system $\Sigma$, as a structure preserving matrix approximation problem for structured matrices determined by \eqref{cyclic_matrix_decomposition_1}. The evolution of the CFSA $\tilde{\Sigma}$ of a given data-driven system $\Sigma$ that is described by \eqref{cyclic_matrix_decomposition_2}, can also be interpreted in terms of graphs like the one shown in \cref{finite_control_graph}.

\begin{figure}[!h]
\begin{center}
\includegraphics[scale=.4]{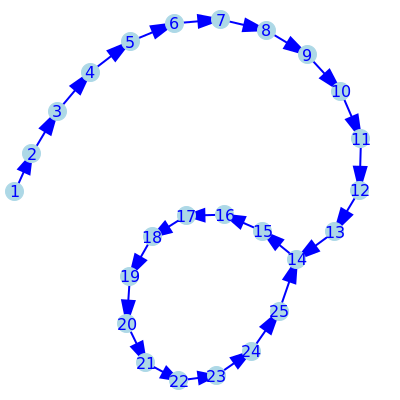}
\end{center}
\caption{Evolution diagram for a $25$-state CFSA $\hat{\Sigma}$ of an AEP data-driven system $\Sigma$.}
\label{finite_control_graph}
\end{figure}

\subsubsection{Predictive finite state approximations}
\label{main_results}

We will study the existence of cyclic finite-state approximations of the form \eqref{cyclic_matrix_decomposition_2} for approximately eventually periodic orbits of data-driven systems. 

Let us consider an orbit's sample $\{x_{t}\}_{1\leq t\leq N}$ from a data-driven system $\Sigma$, and consider the connecting matrix $\mathbb{K}$ determined by dynamic mode decomposition (in the sense of \cite[\S2]{DMD_Schmid}), we will have that $\mathbb{K}$ is an approximate solution to the matrix equation
\begin{equation}
\mathbb{K}W^{(0)}=W^{(1)}=W^{(0)}\mathbb{S}
\label{DMD_rep}
\end{equation}
with $W^{(0)}=[x_1 ~ \cdots ~ x_{N-1}]$, $W^{(1)}=[x_2 ~ \cdots ~ x_{N}]$, and where the companion matrix $\mathbb{S}$ in \eqref{DMD_rep} can be approximated by the least squares solution to the matrix equation $W^{(0)}\mathbb{S}=W^{(1)}$.

\begin{theorem}
\label{c_control_main_result}
Given $\varepsilon>0$, an AEP orbit $\{x_t\}_{t\geq 1}$ of a data-driven system $\Sigma\subseteq \mathbb{C}^n$ has a CFSA whenever $2~\mathrm{ind}_\varepsilon(\{x_t\})\leq n$.
\end{theorem}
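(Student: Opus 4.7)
My plan is to translate the AEP hypothesis into an explicit lollipop on $m = s+T$ states and then dress the GCS of \cref{gen_c_shift_properties} with orthogonal projections to obtain the factorization \eqref{cyclic_matrix_decomposition_1}. First, by definition of $\mathrm{ind}_\varepsilon$ there exist smallest $s\geq 0$, $T\geq 1$ with $s+T+1 = \mathrm{ind}_\varepsilon(\{x_t\})$ and an approximating sequence $\{\tilde{x}_t\}_{t\geq 1}$ satisfying $\|\tilde{x}_t - x_t\|_2 \leq \varepsilon$ and $\tilde{x}_{t+s+T} = \tilde{x}_{t+s}$. Setting $m = s+T$, $k = s+1$, the distinct vectors $\tilde{x}_1,\ldots,\tilde{x}_m$ trace a lollipop whose successor map coincides, for every $t \in \mathbb{Z}^+$, with the index map $t \mapsto 1+\tau_k(t)$ governing $C_{k,m}^t \hat{e}_{1,m}$ in \cref{gen_c_shift_properties}. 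This forces $C_{k,m}$ as the GCS factor of the sought CFSA.

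Second, the snapshot matrix $W = [\tilde{x}_1 \mid \cdots \mid \tilde{x}_m]$ satisfies $W C_{k,m} = [\tilde{x}_2 \mid \cdots \mid \tilde{x}_m \mid \tilde{x}_{s+1}]$ directly from \eqref{c_shift_first_rep}, which is exactly \eqref{DMD_rep} with $\mathbb{S} = C_{k,m}$. Together with \cref{gen_c_shift_properties} this gives $W C_{k,m}^t \hat{e}_{1,m} = \tilde{x}_{t+1}$ for every $t \in \mathbb{Z}^+$. Consequently the candidate $F_t = W C_{k,m}^t W^{+}$, with $W^{+}$ the Moore--Penrose pseudoinverse, has rank at most $m = S$, satisfies $F_t\,\tilde{x}_1 = \tilde{x}_{t+1}$, and inherits from $C_{k,m}$ its spectrum and $\varepsilon$-pseudospectrum on the invariant subspace $H = \mathrm{span}(\tilde{x}_1,\ldots,\tilde{x}_m)$; this matches the DMD connecting matrix in the sense of \cite[\S2]{DMD_Schmid},\cite[\S2.2]{DMD_Kutz} and makes the law meaningful.

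The core work is to recast $F_{\tau(t)}$ in the structured form $\alpha\,\mathcal{K}\,Z_m C_{k,m}^{\tau(t)} Z_m^{\ast}\,\mathcal{L}$ with orthogonal projections $\mathcal{K},\mathcal{L}$. Because the $\tilde{x}_j$'s are typically not orthonormal, the naive choice $Z_m = W$ does not close the factorization, since it would require $W^{+} = W^{\ast}$. Here the hypothesis $2\,\mathrm{ind}_\varepsilon \leq n$ enters decisively, as it guarantees $\dim H^{\perp} \geq m+2$. My plan is to dilate $W$ isometrically, by choosing $V \in \mathbb{C}^{n\times m}$ whose columns form an orthonormal basis of $H$, picking $m$ orthonormal vectors $u_1,\ldots,u_m$ in $H^{\perp}$, and setting $Z_m = V A + U B$, where $U = [u_1 \mid \cdots \mid u_m]$ and $A, B \in \mathbb{C}^{m\times m}$ are chosen so that $Z_m^{\ast} Z_m = \mathbf{1}_m$ while the Gram factor $R$ in $W = V R$ is compensated. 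Then take $\mathcal{K} = V V^{\ast} = P_H$, and construct $\mathcal{L}$ as the orthogonal projection whose range is engineered so that $Z_m^{\ast}\mathcal{L}\,\tilde{x}_1 = \alpha^{-1}\hat{e}_{1,m}$; the $(n - 2m) \geq 2$ spare dimensions of $H^{\perp}$ furnish exactly the slack needed for $\mathcal{K}$ and $\mathcal{L}$ to coexist as genuine orthogonal projections without forcing the $\tilde{x}_j$'s to be pairwise orthogonal.

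The main obstacle I anticipate is propagating the equality $\tilde{x}_{t+1} = F_{\tau(t)}\,\tilde{x}_t$ from the base case $t=1$ to every $t$, since the closed-loop recursion \eqref{cyclic_matrix_decomposition_2} must thread the state $\hat{x}_t$ back through $Z_m^{\ast}$ without accumulating Gram-type cross terms from one step to the next. I would handle this by an induction certifying $\hat{x}_t = \alpha^{-1} Z_m \hat{e}_{1+\tau_k(t-1), m}$: the base case follows from the definition of $\mathcal{L}$ and $\alpha$, and the inductive step uses \cref{gen_c_shift_properties} to conclude that applying $Z_m C_{k,m}^{\tau(t)} Z_m^{\ast}$ advances $\hat{e}_{1+\tau_k(t-1), m}$ to $\hat{e}_{1+\tau_k(t), m}$, after which $\mathcal{K}$ reads off $x_{t+1}$. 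This closes the argument and identifies the resulting system $\tilde{\Sigma}$ of \eqref{cyclic_matrix_decomposition_2} as a CFSA of $\{x_t\}$ based on the sample $\{\tilde{x}_t\}_{1\leq t\leq m}$.
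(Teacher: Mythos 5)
Your proposal is correct and follows essentially the same route as the paper: both arguments use the hypothesis $2\,\mathrm{ind}_\varepsilon(\{x_t\})\le n$ to dilate the snapshot matrix of the periodic approximants into an isometry (you via a QR/Gram-factor compensation $Z_m=VA+UB$ with columns borrowed from the orthogonal complement of the data span; the paper via the SVD, setting $\hat{Z}$ from \eqref{main_factors_def} with the diagonal corrector $\hat{T}$ of \eqref{T_def}), then take $\mathcal{K}$ to be the orthogonal projection onto the data span, $\mathcal{L}$ the rank-one projection onto the dilated first column, and $\mathcal{T}=\hat{Z}C_{s+1,s+T}\hat{Z}^\ast$, with the propagation step handled by \cref{gen_c_shift_properties} exactly as in \eqref{cyclic_decomposition_2}--\eqref{cyclic_decomposition_4}. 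The differences are only in how the isometric dilation is written down, not in the underlying idea.
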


\begin{proof}
Given $\varepsilon>0$, and any AEP orbit $\{x_t\}_{t\geq 1}$ of a data-driven system $\Sigma$ such that $2~\mathrm{ind}_\varepsilon(\{x_t\})\leq n$. We will have that there are integers 
$T\geq 1$, $s\geq 0$, and a sequence $\{\tilde{x}_t\}_{t\geq 1}\subset \mathbb{C}^{n}$ of non-zero vectors such that $\|\tilde{x}_t-x_t\|_2\leq \varepsilon$, $\tilde{x}_{t+s+T}=\tilde{x}_{s+t}$ for each $t\geq 1$, with $2(s+T+1)=2~\mathrm{ind}_\varepsilon(\{x_t\})\leq n$.

Since $\tilde{x}_{t+s+T}=\tilde{x}_{t+s}$ for each $t\geq 1$, one can find a sample $\{\tilde{x}_t\}_{1\leq t\leq s+T}\subseteq\{\tilde{x}_t\}_{t\geq 1}$ such that  $\{\tilde{x}_t\}_{t\geq 1}\subseteq \{\tilde{x}_t\}_{1\leq t\leq s+T}$. Let us consider the snapshot matrix $X\in \mathbb{C}^{n\times (s+T)}$ determined by the expression. 
\begin{equation}
X=\begin{bmatrix}
\tilde{x}_1 & \tilde{x}_2 & \cdots & \tilde{x}_s & \cdots & \tilde{x}_{s+T}
\end{bmatrix}
\label{snapshot_matrix_rep}
\end{equation}
By singular value decomposition properties we will have that $X$ can be decomposed in the form,
\begin{equation}
X=USV
\label{snapshot_matrix_svd}
\end{equation}
where $U\in \mathbb{C}^{n\times (s+T)}$ and $V\in \mathbb{C}^{(s+T)\times (s+T)}$ satisfy 
$U^\ast U=\mathbf{1}_{s+T}=V^\ast V$ and where $S=[s_{ij}]\in \mathbb{R}^{(s+T)\times (s+T)}$ is a diagonal matrix with non-negative entries, such that.
\begin{equation}
\left\{
\begin{array}{l}
s_{11}>0\\
s_{11}\geq s_{jj}, 1\leq j\leq s+T
\end{array}
\right.
\end{equation}

Since $2(s+T+1)=2~\mathrm{ind}_\varepsilon(\{x_t\})\leq n$, by Gram-Schmidt orthogonalization theorem we will have that there is $W\in \mathbb{C}^{n\times (s+T)}$ such that.
\begin{equation}
\left\{
\begin{array}{l}
W^\ast W=\mathbf{1}_{s+T} \\
W^\ast U=\mathbf{0}_{s+T}
\end{array}
\right.
\label{W_constraints}
\end{equation}

Let us define a diagonal matrix $\hat{T}=[t_{ij}]$ in $\mathbb{R}^{(s+T)\times (s+T)}$ such that.
\begin{equation}
t_{jj}=\sqrt{1-\left(\frac{s_{jj}}{s_{11}}\right)^2}, 1\leq j\leq n
\label{T_def}
\end{equation}
Let us set. 
\begin{equation}
\left\{
\begin{array}{l}
\hat{X}=(1/s_{11})USW\\
\hat{Y}=V\hat{T}W\\
\hat{Z}=\hat{X}+\hat{Y}
\end{array}
\right.
\label{main_factors_def}
\end{equation}
By \eqref{snapshot_matrix_svd}, \eqref{main_factors_def} and by direct computation we will have that $X=s_{11}\hat{X}$ and $\hat{Z}^\ast \hat{Z}=\mathbf{1}_{s+T}$. Let us consider a representation for the matrix $\hat{Z}$ of the form $\hat{Z}=[\hat{z}_1 ~ \hat{z}_2 ~ \cdots ~ \hat{z}_{s+T}]$, and let us set.
\begin{equation}
\left\{
\begin{array}{l}
\mathcal{K}=UU^\ast\\
\mathcal{T}=\hat{Z}C_{s+1,s+T}\hat{Z}^\ast\\
\mathcal{L}=\hat{z}_1\hat{z}_1^\ast
\end{array}
\right.
\label{cyclic_decomposition_1}
\end{equation}

By \cref{gen_c_shift_properties} and by direct computation we will have that.
\begin{eqnarray}
\mathcal{T}^t\hat{z}_1&=&\hat{Z}C_{s+1,s+T}^t\hat{Z}^\ast\hat{z}_1=\hat{Z}C_{s+1,s+T}^t\hat{e}_{j,s+T}\nonumber\\
&=&\hat{Z}\hat{e}_{1+\tau_{s+1}(t),s+T}=\hat{z}_{1+\tau_{s+1}(t)}
\label{cyclic_decomposition_2}
\end{eqnarray}
By \eqref{main_factors_def} and \eqref{W_constraints} we will have that. 
\begin{equation}
\hat{z}_1^\ast \tilde{x}_1=\frac{\tilde{x}_{1}^\ast \tilde{x}_{1}}{s_{11}}=\frac{\|\tilde{x}_1\|_2^2}{s_{11}}>0
\label{alpha_def}
\end{equation}
One can now easily verify that.
\begin{equation}
\left\{
\begin{array}{l}
\mathcal{K}^2=\mathcal{K}=\mathcal{K}^\ast\\
\mathcal{L}^2=\mathcal{L}=\mathcal{L}^\ast\\
\mathcal{K}\hat{Z}=\hat{X}\\
\mathcal{L}\tilde{x}_1=(\hat{z}_1^\ast \tilde{x}_1)\hat{z}_1
\end{array}
\right.
\label{cyclic_decomposition_3}
\end{equation}

Let us set $\alpha=s_{11}^2/\|\tilde{x}_1\|_2^2$, we will have that $\alpha>0$. By \eqref{c_shift_per_constraint_1}, \eqref{cyclic_decomposition_1}, \eqref{cyclic_decomposition_2} and \eqref{alpha_def} we will have that for each $t\in \mathbb{Z}^+$.
\begin{eqnarray}
\alpha\mathcal{K}\mathcal{T}^t\mathcal{L}\tilde{x}_1&=&\mathcal{K}\frac{s_{11}}{(\hat{z}_1^\ast \tilde{x}_1)}\mathcal{T}^t\mathcal{L}\tilde{x}_1\nonumber\\
&=&\mathcal{K}s_{11}\mathcal{T}^t\hat{z}_1=s_{11}\mathcal{K}\hat{z}_{1+\tau_{s+1}(t)}\nonumber\\
&=&\frac{s_{11}}{s_{11}}\tilde{x}_{1+\tau(t)}=\tilde{x}_{1+\tau_{s+1}(t)}
\label{cyclic_decomposition_4}
\end{eqnarray}

Let us set $F_t=\alpha\mathcal{K}\hat{Z} C_{s+1,s+T}^{\tau_{s+1}(t)}\hat{Z}^\ast\mathcal{L}$, with $\tau_{s+1}$ defined by \eqref{tau_def}. By \eqref{cyclic_decomposition_2},\eqref{cyclic_decomposition_3} and \eqref{cyclic_decomposition_4}, we will have that $\{F_t\}_{t\geq 1}$ is a meaningful matrix control law for $\{\tilde{x}_t\}_{t\geq 1}$, and the evolution of $\{\tilde{x}\}_{t\geq 1}$ will be controlled by the closed-loop system.
\begin{equation}
\tilde{\Sigma}:\left\{
\begin{array}{l}
\hat{x}_1=\alpha\mathcal{L}\tilde{x}_1\\
\hat{x}_{t+1}=\hat{Z} C_{s+1,s+T}^{\tau_{s+1}(t)} \hat{Z}^\ast \hat{x}_t, ~~ t\in\mathbb{Z}^+\\
\tilde{x}_{t}=\mathcal{K}\hat{x}_t
\end{array}
\right.
\end{equation}
This completes the proof.
\end{proof}

By combining \eqref{cyclic_matrix_decomposition_1} and \eqref{DMD_rep}, one can derive the following classification/approximation result.

\begin{theorem}
\label{main_classification_theorem}
Given some orbit's sampled data $\{\tilde{x}_t\}_{1\leq t\leq N}$ from a data-driven system $\Sigma\subseteq \mathbb{C}^n$ with $2N\leq n$, we will have that the integer index $k$ of the GCS factor $C_{k,N-1}$ of the CFSA $\tilde{\Sigma}$ based on the sample, is determined by $k={\arg \min}_{1\leq j\leq N-1}\|\tilde{x}_j-\tilde{x}_N\|_2$.
\end{theorem}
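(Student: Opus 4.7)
The plan is to apply \cref{c_control_main_result} to the sample and then read off the index of the GCS factor directly from the periodicity structure encoded in the construction. First I would invoke \cref{c_control_main_result} to obtain a CFSA of the form \eqref{cyclic_matrix_decomposition_2} whose GCS factor is $C_{s+1,s+T}$, where $s,T$ are the minimal integers appearing in the AEP condition \eqref{index_def}, so that $\tilde{x}_{t+s+T}=\tilde{x}_{t+s}$ for all $t\geq 1$. Matching the dimension of the claimed GCS factor $C_{k,N-1}$ with the one produced by the construction forces $s+T=N-1$ and $k=s+1$, which is the identification I would use throughout.

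Next I would exploit the periodicity relation at $t=1$. It yields $\tilde{x}_{s+T+1}=\tilde{x}_{s+1}$, and by the above identification this reads $\tilde{x}_{N}=\tilde{x}_{k}$. Consequently $\|\tilde{x}_{k}-\tilde{x}_{N}\|_{2}=0$, so the objective $\|\tilde{x}_{j}-\tilde{x}_{N}\|_{2}$ attains its (nonnegative) minimum value zero at $j=k$. This immediately shows that $k$ belongs to $\arg\min_{1\leq j\leq N-1}\|\tilde{x}_{j}-\tilde{x}_{N}\|_{2}$.

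The step I expect to be the main obstacle is the uniqueness of this minimizer within $\{1,\ldots,N-1\}$, since formally the stated formula asks for an $\arg\min$ rather than a set. To settle this I would rely on the \emph{minimality} built into the definition of $\mathrm{ind}_\varepsilon(\{x_t\})$: the integers $s$ and $T$ are chosen as the smallest ones for which \eqref{index_def} holds, so no shorter transient $\{\tilde{x}_1,\ldots,\tilde{x}_s\}$ and no smaller period $\{\tilde{x}_{s+1},\ldots,\tilde{x}_{s+T}\}$ can reproduce the same orbit. This minimality forces the vectors in one full period to be pairwise distinct and forces every transient vector $\tilde{x}_j$ with $j\leq s$ to differ from $\tilde{x}_{s+1}=\tilde{x}_N$ (otherwise $s$ could be decreased). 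Therefore for every $j\in\{1,\ldots,N-1\}\setminus\{k\}$ one has $\|\tilde{x}_j-\tilde{x}_N\|_{2}>0$, while $\|\tilde{x}_k-\tilde{x}_N\|_{2}=0$.

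Combining the two parts, the argmin is uniquely $k=s+1$, which matches the integer index of the GCS factor $C_{k,N-1}$ produced by the CFSA construction, completing the proof. If the uniqueness reasoning is deemed too implicit in \eqref{index_def}, an alternative route would be to argue pseudospectrally via \eqref{connecting_matrix_equations} and \eqref{DMD_rep}: the connecting matrix $\mathbb{K}$ sends $\tilde{x}_{N-1}$ to $\tilde{x}_{N}$, so the column action of $C_{k,N-1}$ given by \eqref{c_shift_second_rep} must match $\tilde{x}_N$ to $\tilde{x}_k$ in the coordinates $\{\hat{z}_j\}$ of \eqref{cyclic_decomposition_1}, leading again to $k=\arg\min_j\|\tilde{x}_j-\tilde{x}_N\|_2$.
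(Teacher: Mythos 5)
Your route is genuinely different from the paper's, and it contains a gap that I do not think can be repaired as written. The paper never invokes \cref{c_control_main_result} or the AEP periodicity structure here: it characterizes the GCS factor as the least-squares solution of $W^{(1)}=W^{(0)}C_{k,N-1}$ subject to the structural constraint \eqref{c_shift_first_rep}. Since every column of $C_{k,N-1}$ except the last is forced by the shift structure (those columns reproduce $\tilde{x}_{j+1}=W^{(0)}\hat{e}_{j+1,N-1}$ exactly), the only free datum is the last column $\hat{c}_{N-1}=\hat{e}_{k,N-1}$, and minimizing the residual $\|W^{(0)}\hat{e}_{j,N-1}-\tilde{x}_N\|_2=\|\tilde{x}_j-\tilde{x}_N\|_2$ over $j$ is exactly that constrained least-squares problem. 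This argument requires no exact periodicity, which is precisely what makes the formula usable in \cref{alg:main_alg} on approximately periodic data, where $\min_j\|\tilde{x}_j-\tilde{x}_N\|_2$ is generally nonzero. The ``alternative route'' you sketch in your last sentence is essentially the paper's argument, and it is the one you should have led with.

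The gap in your main argument is twofold. First, you assume the sample $\{\tilde{x}_t\}_{1\leq t\leq N}$ is exactly eventually periodic, i.e., that it coincides with the auxiliary sequence of \eqref{index_def}; the theorem only assumes sampled data from $\Sigma$, so $\tilde{x}_N=\tilde{x}_k$ need not hold and your ``the minimum is zero at $j=k$'' step fails at the outset. Second, even granting exact periodicity, the minimality of $s$ and $T$ does not force the states within one period to be pairwise distinct, nor transient states to differ from $\tilde{x}_{s+1}$. Concretely, take nonzero vectors $c\neq d$ and the orbit $c,c,d,c,d,c,d,\ldots$: here $s=1$ and $T=2$ are minimal (no smaller $s$ or $T$ satisfies \eqref{index_def}), so $N=4$ and $k=s+1=2$, yet $\|\tilde{x}_1-\tilde{x}_4\|_2=\|\tilde{x}_2-\tilde{x}_4\|_2=0$, so the argmin is not unique and the transient state $\tilde{x}_1$ equals $\tilde{x}_N$ even though $s$ cannot be decreased. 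Similarly, an orbit repeating the block $b,c,b,e$ (with $b,c,e$ distinct) has minimal period $4$ but a repeated state inside one period, defeating the ``pairwise distinct within a period'' claim. So your uniqueness step is false, and the identification of $k$ with the minimizer has to come from the least-squares/companion-structure argument rather than from periodicity.
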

\begin{proof}
Let us set $W^{(0)}=[\tilde{x}_1 ~ \cdots ~ \tilde{x}_{N-1}]$, $W^{(1)}=[\tilde{x}_2 ~ \cdots ~ \tilde{x}_{N}]$, and let us write $\hat{c}_{N-1}$ to denote the $(N-1)$-column of $C_{k,N-1}$. By changing basis and reordering, if necessary, one can think of the GCS factor $C_{k,N-1}$ for the CFSA $\tilde{\Sigma}$ as a least squares solution to the matrix equation
\begin{equation}
W^{(1)}=W^{(0)}C_{k,N-1}
\end{equation}
where only $\hat{c}_{N-1}$ needs to be determined, and is constrained by \eqref{c_shift_first_rep} to satisfy $\hat{c}_{N-1}=\hat{e}_{k,n}$ for some $1\leq k\leq N-1$. This in turn implies that $k={\arg \min}_{1\leq j\leq N-1} \|W^{(0)}\hat{e}_{j,n}-\tilde{x}_{N}\|_2={\arg \min}_{1\leq j\leq N-1} \|\tilde{x}_{j}-\tilde{x}_{N}\|_2$. This completes the proof.
\end{proof}

\section{Algorithm}
\label{section_algorithms}
We have that the previous theoretical techniques can be implemented to derive finite-state approximation algorithms for approximately eventually periodic data-driven systems, that can be described using the following transition diagram.

\vspace*{1pc}

\tikzstyle{block} = [draw, fill=white, rectangle, 
    minimum height=3em, minimum width=6em]
\tikzstyle{sum} = [draw, fill=white, circle, node distance=1cm]
\tikzstyle{input} = [coordinate]
\tikzstyle{output} = [coordinate]
\tikzstyle{pinstyle} = [pin edge={to-,thin,black}]

\begin{tikzpicture}[auto, node distance=2cm,>=latex']

    \node [input, name=input] {};
    \node [block, right of=input] (delay) {$Z^{-1}$};
    \node [block, right of=delay, node distance=3cm] (K) {K};

    \draw [->] (delay) -- node[name=u] {$\hat{x}_t$} (K);
    
    \node [output, right of=K] (output) {};
    \node [block, below of=delay, pin={[pinstyle]below: $\xymatrix{
    \mathbb{C}[C_{k,m}] }$},
            node distance=2cm] (feedback) {$\hat{F}_t$};

    \draw [->] (input) -- node {$\hat{x}_{t+1}$} (delay);
    \draw [->] (K) -- node [name=y] {$x_t$}(output);
    \draw [-] (feedback) -| node[name=input] {$$} 
        node [near end] {$$} (input);
    \draw [->] (u) |- (feedback);    
\end{tikzpicture}

A prototypical algorithm for data-driven CFSA computation is presented below.

\begin{algorithm}[h!]
\caption{Data-driven cyclic finite state approximation algorithm}
\label{alg:main_alg}
\begin{algorithmic}
\STATE{{\bf Data:}\:\:\:{\sc Real number $\varepsilon>0$, State data history:} $\{x_t\}_{1\leq t\leq s+ T+1}$, $s,T\in \mathbb{Z}^+$}
\STATE{{\bf Result:}\:\:\: {{\sc Approximate matrix realizations:} $(\mathcal{K},\mathcal{T},\mathcal{L})\in \mathbb{C}^{n\times n}\times \mathbb{C}^{n\times n}\times \mathbb{C}^{n\times n}$ of $\tilde{\Sigma}$}}
\begin{enumerate}
\STATE{Set $m=s+T$\;}
\STATE{Get/Compute sample $\{x_t\}_{1\leq t\leq m+1}$ from $\Sigma$\;}
\STATE{Compute $k={\arg\min}_{1\leq j\leq {m}}\|x_{m+1}-x_{j}\|_2$\;}
\STATE{Compute the SVD $\mathbb{V}S\mathbb{W}=[x_1~\cdots~x_{m}]$\;}
\STATE{Compute the matrix $\hat{\mathbb{Z}}_m=[\hat{z}_1~\cdots~\hat{z}_m]$ in \eqref{main_factors_def} for $\{x_t\}_{1\leq t\leq m}$\;}
\STATE{Set $\mathcal{K}=\mathbb{V}\mathbb{V}^\ast$\;}
\STATE{Set $\mathcal{L}=(s_{11}^2/\|\tilde{x}_1\|_2^2)\hat{z}_1\hat{z}_1^\ast$\;}
\STATE{Set $\mathcal{T}=\hat{\mathbb{Z}}_m C_{k,m}\hat{\mathbb{Z}}_m^\ast$\;}
\end{enumerate}
\RETURN $\{\mathcal{K},\mathcal{T},\mathcal{L}\}$
\end{algorithmic}
\end{algorithm}

\section{Numerical Experiments}
\label{section_experiments}
In this section we will consider the snapshot matrices of approximately periodic and eventually periodic sampled orbits $\Sigma_{P}$ and $\Sigma_{EP}$ in $\mathbb{C}^{10001\times 202}$, respectively, that are generated by discretizations (and perturbations) of generic damped wave models determined by equations of the form.

\begin{equation}
\left\{
\begin{array}{l}
\partial_t^2 \psi=\alpha^2\partial_x^2\psi+\delta\partial_t\psi,~(x,t)\in (0,L)\times (0,\infty)\\
\psi(x,0)=\psi_0(x)\\
\partial_t\psi(x,0)=\psi_1(x)\\
\partial_x\psi(0,t)=\psi(L,t)=0
\end{array}
\right.
\label{wave_model}
\end{equation}
for some $L>0$, with $\alpha,\delta\in\mathbb{R}$. Discretizations of \eqref{wave_model} can be computed using Crank-Nicolson-type finite-difference schemes of the form
\begin{equation}
\left\{
\begin{array}{l}
A\Phi_{t+1}=B\Phi_{t}\\
\Psi_{t}=C\Phi_t
\end{array}
\right.,t\geq 1
\label{discrete_wave_model}
\end{equation}
for some matrices $A,B\in \mathbb{C}^{20002\times 20002}$ and $C\in \mathbb{C}^{10001\times 20002}$, with $A$ invertible and $\Phi_t\in\mathbb{C}^{20002\times 1}$ for each $t\geq 1$. 

Generic models of the form \eqref{wave_model} and \eqref{discrete_wave_model} together with their perturbations, have applications in the simulation of transmission lines for electrical signals in computer networks, power electronics, embedded and power systems, among others. The dynamical behavior forcasting determined by the $201$-state CFSA $\tilde{\Sigma}_{P}$ for the approximately periodic orbit $\Sigma_{P}$ is shown in figure \cref{fig:experiment_1}.

\begin{figure}[!h]
\begin{center}
\includegraphics[scale=.4]{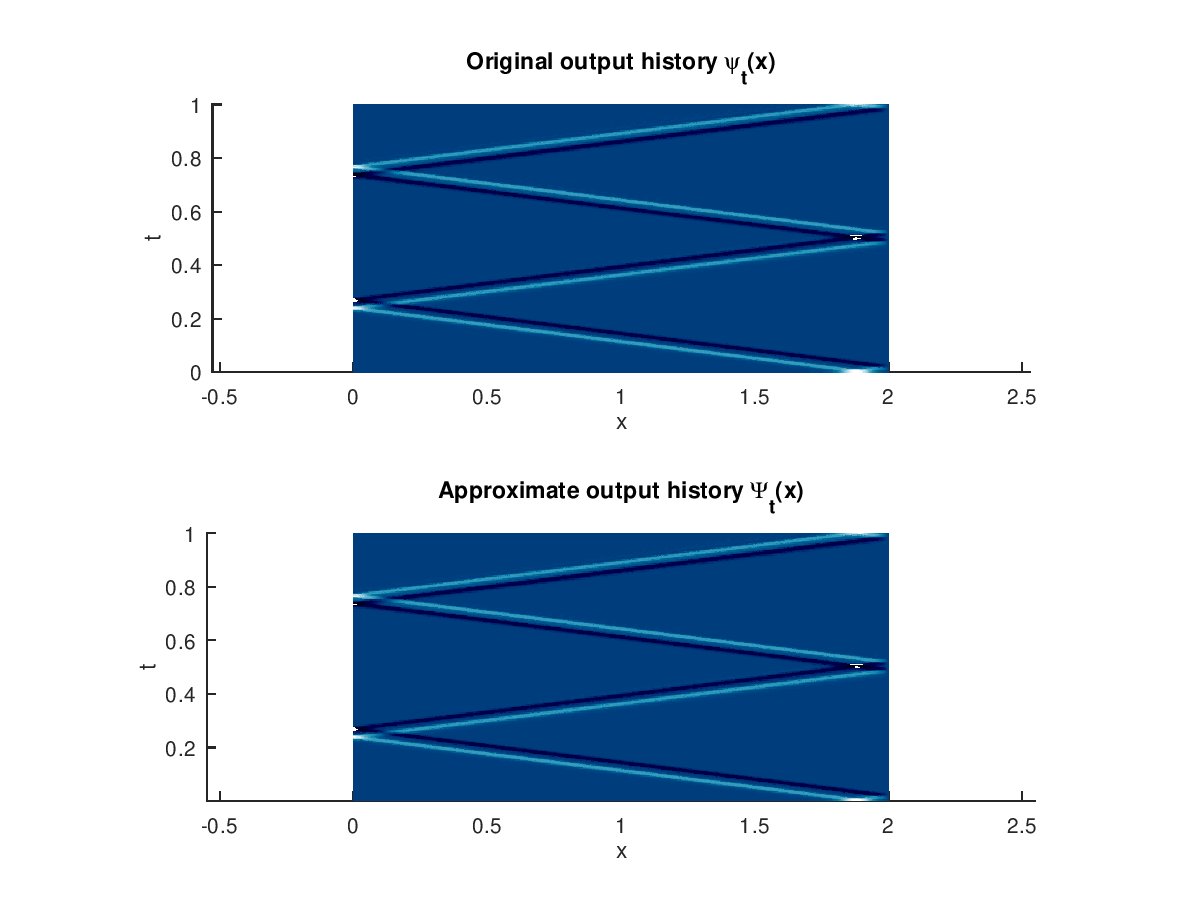}
\end{center}
\caption{A $201$-state approximation $\tilde{\Sigma}_P$ (bottom), based on a sample $\Sigma_P$ from an AEP orbit of a transmission line model (top).}
\label{fig:experiment_1}
\end{figure}

In order to visualize the meaningfulness of the control law of the CFSA $\tilde{\Sigma}_{P}$ of $\Sigma_{P}$ for an approximation error of $O(10^{-12})$, the pseudospectra $\sigma_\varepsilon(\mathbb{S}_{\Sigma_P})$ and $\sigma_\varepsilon(C_{k,201})$, with $0\leq \varepsilon\leq \varepsilon'$ for some fixed $\varepsilon'>0$, for the companion matrix $\mathbb{S}_{\Sigma_P}$ determined by \eqref{DMD_rep}, and the GCS factor $C_{k,201}$ predicted for $\tilde{\Sigma}_{P}$ by \cref{main_classification_theorem} and \cref{alg:main_alg}, are shown in \cref{finite_control_pspectra_1_1}.

\begin{figure}[!h]
\begin{center}
\includegraphics[scale=.4]{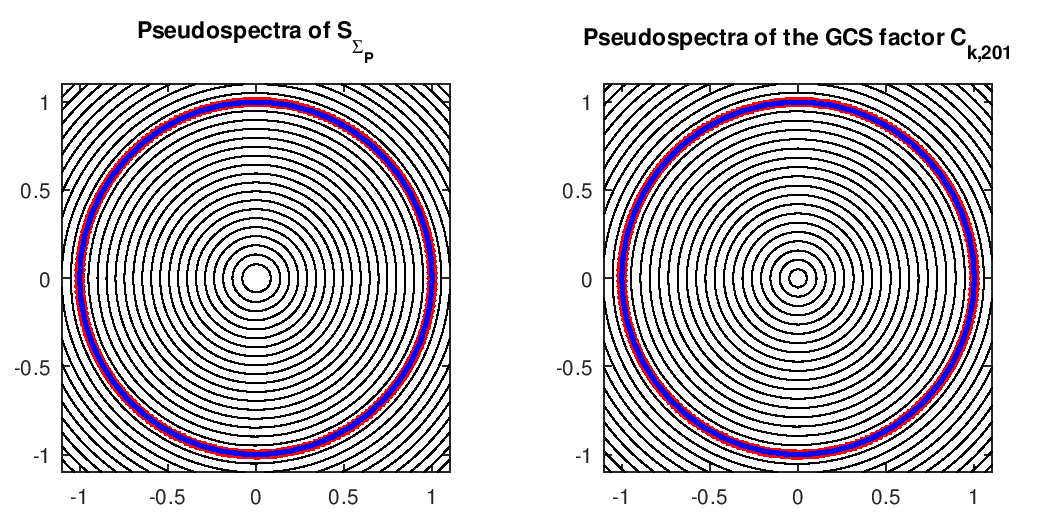}
\includegraphics[scale=.4]{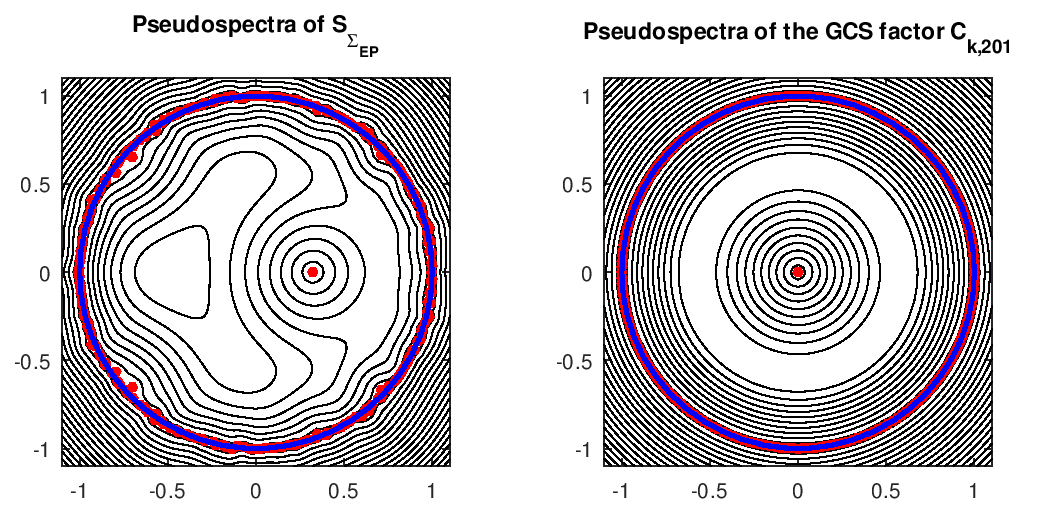}
\end{center}
\caption{Pseudospectra $\sigma_\varepsilon(\mathbb{S}_{\Sigma_P})$ (top-left), $\sigma_\varepsilon(C_{k,201})$ for the predicted GCS factor of $\tilde{\Sigma}_P$ (top-right), $\sigma_\varepsilon(\mathbb{S}_{\Sigma_{EP}})$ (bottom-left), $\sigma_\varepsilon(\hat{C}_{k,201})$ for the predicted GCS factor of $\tilde{\Sigma}_{EP}$ (bottom-right), for $0\leq\varepsilon\leq\varepsilon'$. The black lines represent elements in the pseudospectra, the blue lines represent $\mathcal{S}^1$, and the red dots represent the eigenvalues.}
\label{finite_control_pspectra_1_1}
\end{figure}

If the system determined by \eqref{discrete_wave_model} is perturbed, simulating either perturbations in the original model \eqref{wave_model}, or imprecisions caused by noisy measurements, one can obtain a perturbed AEP orbit represented by the snapshot matrix $\Sigma_{EP}$ together with a dynamical behavior forcasting determined by a $201$-state CFSA $\tilde{\Sigma}_{EP}$, like the ones shown in \cref{fig:experiment_2}.

\begin{figure}[!h]
\begin{center}
\includegraphics[scale=.4]{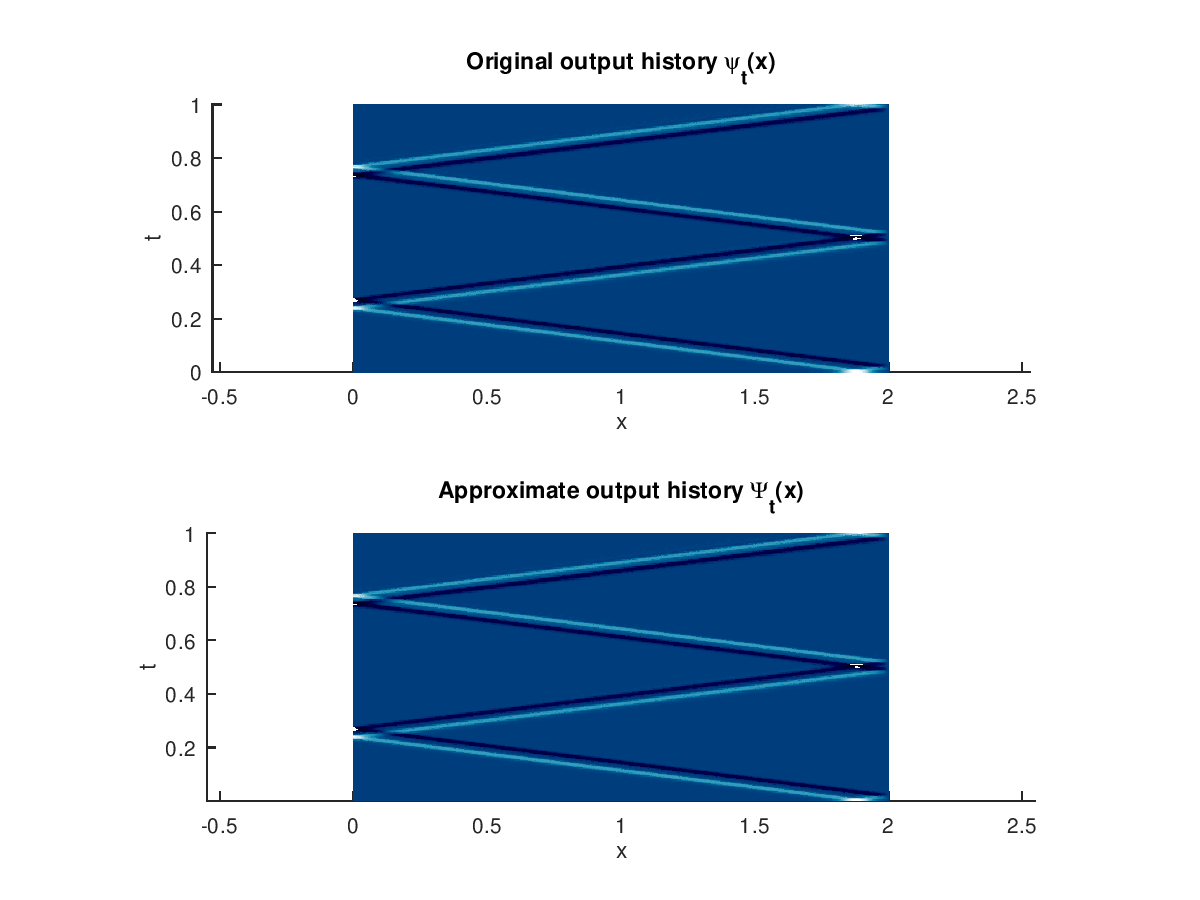}
\end{center}
\caption{A $201$-state approximation $\tilde{\Sigma}_{EP}$ (bottom), based on a sample $\Sigma_{EP}$ from a perturbed AEP orbit of a transmission line model (top).}
\label{fig:experiment_2}
\end{figure}

In order to visualize the meaningfulness of the control law of the CFSA $\tilde{\Sigma}_{EP}$ of $\Sigma_{EP}$, for an approximation error of $O(10^{-12})$ the $\varepsilon$-pseudospectra of the companion matrix $\mathbb{S}_{\Sigma_{EP}}$ and the predicted GCS factor for $\tilde{\Sigma}_{EP}$, are also shown in \cref{finite_control_pspectra_1_1}.

\section{Conclusion and Future Directions}
The results in \S\ref{main_results} allow one to derive computational methods like the one described in \cref{alg:main_alg}, for finite state approximation/forcasting of the dynamical behavior of a data-driven system determined by some data sampled from a set of valid/feasible states. 

Some applications of \cref{alg:main_alg} to data-based artificially intelligent schemes that learn from mistakes, and can be used for model predictive control of industrial processes, will be presented in future communications.

The connections of the results in \S\ref{main_results} to the solution of problems related to controllability and realizability of finite-state systems in classical and quantum information and automata theory, in the sense of  \cite{finite_state_systems,finite_quantum_control_systems,finite_state_machine_approximation,BROCKETT20081}, will be further explored.

\section*{Acknowledgment}

The structure preserving matrix computations needed to implement \cref{alg:main_alg}, were performed in the Scientific Computing Innovation Center ({\bf CICC-UNAH}) of the National Autonomous University of Honduras.

I am grateful with Terry Loring, Marc Rieffel, Marius Junge, Douglas Farenick, Masoud Khalkhali, Alexandru Chirvasitu, Concepci\'on Ferrufino, Leonel Obando, Mario Molina and William F\'unez for several interesting questions and comments, that have been very helpful for the preparation of this document.

\bibliographystyle{IEEEtran}
\bibliography{FredyVides}

\end{document}